\documentclass[11pt,reqno]{amsart}
\usepackage[margin=8em, footskip=2em]{geometry}
\usepackage[latin1]{inputenc}
\usepackage[T1]{fontenc}
\usepackage[american]{babel}
\usepackage{amsmath,amsfonts,amssymb,amsthm,bm,wasysym,pifont,mathtools}
\usepackage{tikzsymbols}
\usepackage{lastpage}
\usepackage{enumitem}
\usepackage{xcolor}
\usepackage[colorlinks,linkcolor=blue!50!black,citecolor=blue!50!black,pagebackref,hypertexnames=false, breaklinks]{hyperref}
\usepackage{cleveref}
\usepackage{float,caption}
\usepackage{tikz}
\usetikzlibrary{matrix,arrows,backgrounds,calc,chains,automata,positioning,patterns}
\usetikzlibrary{decorations,decorations.pathmorphing}
\usetikzlibrary{shapes.geometric,calc}
\allowdisplaybreaks
\newtheorem{theorem}{Theorem}
\newtheorem{proposition}[theorem]{Proposition}

\theoremstyle{definition}

\newtheorem*{def*}{Definition}

\newtheorem{remark}{Remark}

\newtheorem*{thm*}{Theorem}
\newtheorem*{lem*}{Lemma}
\newtheorem*{prop*}{Proposition}
\newtheorem*{rem*}{Remark}

\def\SG{{\rm SG}}
\numberwithin{equation}{section}
\title[QE on SG]{
Dirac measures can be quantum limits \\
on the Sierpinski gasket}
\date{\today}
\author{Patricia Alonso Ruiz}
\address{Institute of Mathematics, Friedrich Schiller University, Jena, Germany}
\email{patricia.alonso.ruiz@uni-jena.de}
\author{Nikolay Tzvetkov}
\address{UMPA, \'Ecole Normal Sup\'erieure Lyon, Lyon, France}
\email{nikolay.tzvetkov@ens-lyon.fr}
\subjclass[2010]{58J51;28A80}
\keywords{quantum ergodicity; Laplacian; eigenfunctions; Sierpinski gasket}
\begin{document}
\begin{abstract}
    We prove that on the Sierpinski gasket, a prototype of compact fractal space, Dirac distributions arise as the weak limit of probability measures associated with sequences of high energy eigenfunctions of the Laplacian in a way that cannot happen in compact Riemannian manifolds.
\end{abstract}
\maketitle
\section{Introduction}

Given an orthonormal basis $\{\psi_k\}_{k\geq 1}$ of eigenfunctions of the Laplace operator (with suitable boundary conditions) on a compact domain $K$ equipped with a (probability) measure $\mu$ (the one involved in the definition of the Laplacian), the expression $|\psi_k|^2d\mu$ naturally defines another probability measure on $K$.  Broadly speaking,  quantum mechanics understands it as the probability distribution of a particle's position at the energy level described by the corresponding eigenvalue.

\medskip

A sequence of eigenfunctions $\{\psi_n\}_{n\geq 1}$ whose associated eigenvalues $\lambda_{n}$ increase towards infinity is said to \emph{equidistribute} in position if the sequence of measures $\{|\psi_n|^2d\mu\}_{n\geq 1}$ converges weakly to the uniform measure. In other words, if
\begin{equation}\label{E:def_equidistribute}
    \int_K\eta|\psi_n|^2d\mu\xrightarrow{n\to\infty}\int_K\eta\,d\mu
\end{equation}
for all $\eta\in C(K)$. 
This kind of result classically belongs to the research area of quantum ergodicity, a subject with an extensive literature; for further details, we refer to the review articles~\cite{Sar11,Zel19, Dya23} and to the books~\cite{Zwo12,Ana22}.

\medskip

A fundamental question in quantum ergodicity is, whether something else than~\eqref{E:def_equidistribute} can happen, and if so, what kind of measures can arise as limits of such sequences of high energy eigenfunctions.

\medskip

The main result in this note is that, on the Sierpinski gasket, which is a prototype fractal compact space, for any point in a dense set of the gasket it is possible to find a sequence of high energy eigenfunctions that converges to a Dirac measure concentrated at that point. This phenomenon is in stark contrast to Euclidean models like the disk, or to other compact Riemannian manifolds with and without boundary. 
Indeed, in the setting of Riemannian manifolds Dirac measures cannot be limits of a sequence of high energy eigenfunctions because it would be in contradiction with a version of the propagation of singularity theorem, see \cite{Ho,Zwo12} for the boundaryless case and \cite{MS1,MS2,G,BL,Sun} for the more delicate case of manifolds with boundaries. 
\medskip

The paper is organized as follows: Section~\ref{S:Background} provides the necessary background on the Sierpinski gasket and the eigenfunctions of its standard Laplacian. In Section~\ref{S:main_result} we prove the main result, Theorem~\ref{T:main_result}, which roots in the existence of a rich family of localized eigenfunctions.

\section{Background and preliminaries}\label{S:Background}
\subsection{The Sierpinski gasket (SG)}\label{SS:SG_basics}
The standard Sierpinski gasket ($\SG$), see e.g. Figure~\ref{F:SG_measure}, is the unique compact subset of $\mathbb{R}^2$ that arises as the fixed point
\begin{equation}\label{E:SG_fixed_point}
    \SG=\bigcup_{k=0}^2F_k(\SG)
\end{equation} 
where $F_k\colon\mathbb{R}^2\to\mathbb{R}^2$, $k=0,1,2$, are given by
\begin{equation}\label{E:def_Fi}
    F_k(p):=\frac{1}{2}(p-p_k)+p_k
\end{equation}
and $V_0:=\{p_0,p_1,p_2\}$ denotes the set of vertices of an equilateral triangle of side length one, c.f. Figure~\ref{F:SG_measure}. The set $V_0$ is regarded as the natural boundary of $\SG$. Due to its self-similar structure, for each $m\geq 1$, one can decompose $\SG$ into $3^m$ rescaled copies of itself, $F_w(\SG)$, where $F_{w}:=F_{w_1}{\circ}\cdots \circ F_{w_m}$ with $w=w_1\ldots w_m\in\{0,1,2\}^m$.

\medskip

As a metric measure space, $\SG$ is equipped with the Euclidean metric and the standard self-similar Bernoulli measure $\mu$ that gives the same weight to each copy $F_w(\SG)$. This measure thus satisfies
\begin{equation}\label{E:measure_scaling}
    \mu(F_w(B))=3^{-m}\mu(B),
\end{equation}
for any $B\subseteq\SG$ and $m\geq 0$, and it is also comparable to the natural $d_H$-dimensional Hausdorff measure on $\SG$, where $d_H=\frac{\log 3}{\log 2}$ is the (Euclidean) Hausdorff dimension of $\SG$.

\begin{figure}[H]
    \includegraphics[scale=.45]{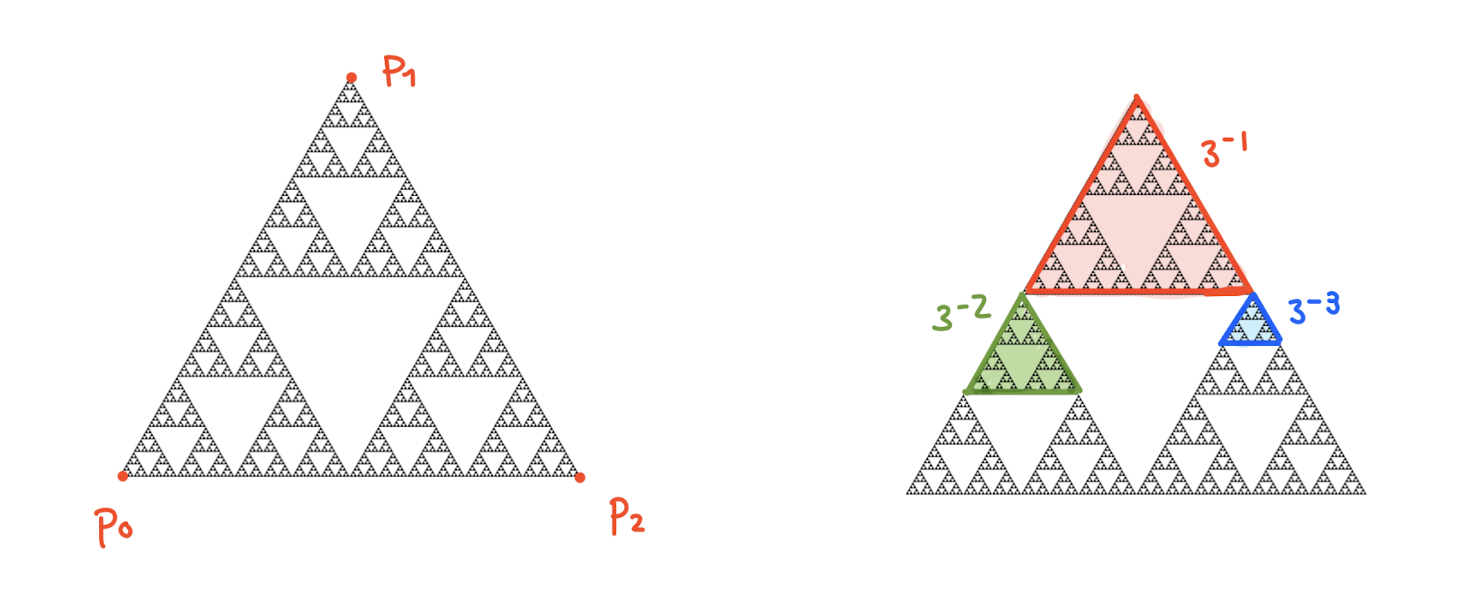}
    \caption{The standard Sierpinski gasket $\SG$, boundary points and the measure of different cells.}
    \label{F:SG_measure}
\end{figure}

\subsection{The Laplace operator on SG} 
The standard intrinsic Laplace operator on $\SG$ is usually constructed through finite graph approximations, based on the fact that the sets $V_m:=\bigcup_{w\in\{0,1,2\}^m}F_{w}(V_0)$ approximate $\SG$ in the sense that $V_m\subseteq V_{m+1}$ and
\begin{equation}\label{E:Vstar}
V_*:=\bigcup_{m\geq 0}V_m
\end{equation}
is dense in $\SG$. For each $m\geq 0$, one thus considers a graph with vertex set $V_m$ that describes the finite $m$-level approximation of $\SG$ as suggested in Figure~\ref{F:SG_approx}.
\begin{figure}[H]
\begin{center}
\renewcommand{\arraystretch}{0.5}
\begin{tabular}{cccc}
\begin{tikzpicture}
\tikzstyle{every node}=[draw,circle,fill=black,minimum size=2pt, inner sep=0pt]`'
\draw ($(0:0)$) node () {} --++ ($(0:2)$) node () {} --++ ($(120:2)$) node () {} --++ ($(240:2)$) node () {};
\end{tikzpicture}
\hspace*{2em}&
\begin{tikzpicture}
\tikzstyle{every node}=[draw,circle,fill=black,minimum size=2pt, inner sep=0pt]
\draw ($(0:0)$) node () {} --++ ($(0:2/2)$) node () {} --++ ($(120:2/2)$) node () {} --++ ($(240:2/2)$) node () {};
\draw ($(0:2/2)$) node () {} --++ ($(0:2/2)$) node () {} --++ ($(120:2/2)$) node () {} --++ ($(240:2/2)$) node () {};
\draw ($(60:2/2)$) node () {} --++ ($(0:2/2)$) node () {} --++ ($(120:2/2)$) node () {} --++ ($(240:2/2)$) node () {};
\end{tikzpicture}
\hspace*{2em}&
\begin{tikzpicture}
\tikzstyle{every node}=[draw,circle,fill=black,minimum size=2pt, inner sep=0pt]
\draw ($(0:0)$) node () {} --++ ($(0:2/4)$) node () {} --++ ($(120:2/4)$) node () {} --++ ($(240:2/4)$) node () {};
\draw ($(0:2/4)$) node () {} --++ ($(0:2/4)$) node () {} --++ ($(120:2/4)$) node () {} --++ ($(240:2/4)$) node () {};
\draw ($(60:2/4)$) node () {} --++ ($(0:2/4)$) node () {} --++ ($(120:2/4)$) node () {} --++ ($(240:2/4)$) node () {};
\foreach \a in {0,60}{
\draw ($(\a:2/2)$) node () {} --++ ($(0:2/4)$) node () {} --++ ($(120:2/4)$) node () {} --++ ($(240:2/4)$) node () {};
\foreach \b in{0,60}{
\draw ($(\a:2/2)+(\b:2/4)$)node () {} --++ ($(0:2/4)$) node () {} --++ ($(120:2/4)$) node () {} --++ ($(240:2/4)$) node () {};
}
}
\end{tikzpicture}
\hspace*{2em}&
\begin{tikzpicture}
\tikzstyle{every node}=[draw,circle,fill=black,minimum size=2pt, inner sep=0pt]
\draw ($(0:0)$) node () {} --++ ($(0:2/8)$) node () {} --++ ($(120:2/8)$) node () {} --++ ($(240:2/8)$) node () {};
\draw ($(0:2/8)$) node () {} --++ ($(0:2/8)$) node () {} --++ ($(120:2/8)$) node () {} --++ ($(240:2/8)$) node () {};
\draw ($(60:2/8)$) node () {} --++ ($(0:2/8)$) node () {} --++ ($(120:2/8)$) node () {} --++ ($(240:2/8)$) node () {};
\foreach \a in {0,60}{
\draw ($(\a:2/4)$) node () {} --++ ($(0:2/8)$) node () {} --++ ($(120:2/8)$) node () {} --++ ($(240:2/8)$) node () {};
\foreach \b in{0,60}{
\draw ($(\a:2/4)+(\b:2/8)$)node () {} --++ ($(0:2/8)$) node () {} --++ ($(120:2/8)$) node () {} --++ ($(240:2/8)$) node () {};
}
}
\foreach \c in{0,60}{
\draw ($(\c:2/2)$) node () {} --++ ($(0:2/8)$) node () {} --++ ($(120:2/8)$) node () {} --++ ($(240:2/8)$) node () {};
\draw ($(\c:2/2)+(0:2/8)$) node () {} --++ ($(0:2/8)$) node () {} --++ ($(120:2/8)$) node () {} --++ ($(240:2/8)$) node () {};
\draw ($(\c:2/2)+(60:2/8)$) node () {} --++ ($(0:2/8)$) node () {} --++ ($(120:2/8)$) node () {} --++ ($(240:2/8)$) node () {};
\foreach \a in {0,60}{
\draw ($(\c:2/2)+(\a:2/4)$) node () {} --++ ($(0:2/8)$) node () {} --++ ($(120:2/8)$) node () {} --++ ($(240:2/8)$) node () {};
\foreach \b in{0,60}{
\draw ($(\c:2/2)+(\a:2/4)+(\b:2/8)$)node () {} --++ ($(0:2/8)$) node () {} --++ ($(120:2/8)$) node () {} --++ ($(240:2/8)$) node () {};
}
}
}
\end{tikzpicture}
\\ [1em]
$V_0$ \hspace*{2em}& $V_1$ \hspace*{2em}& $V_2$ \hspace*{2em}& $V_3$
\end{tabular}
\end{center}
\caption{Graph approximations of the Sierpinski gasket.}
\label{F:SG_approx}
\end{figure}

The graph energy associated with the $m$-level approximation of $\SG$ is given by
\begin{equation}\label{E:graph_energy_m}
\mathcal{E}_m (u):=\frac{1}{2}\sum_{p\stackrel{m}{\sim} q}(u(q)-u(p))^2,
\end{equation}
for any $u\colon V_m\to \mathbb{R}$, where $p\stackrel{m}{\sim}q$ means that $p,q\in V_m$ are neighbors as indicated in Figure~\ref{F:SG_approx}. Noting that $V_*:=\bigcup_{m\geq 0}V_m$ is dense in $\SG$, the standard energy on $\SG$ arises as the limit
\begin{equation*}
    \mathcal{E}(u):=\lim_{m\to\infty}\left(\frac{5}{3}\right)^m\mathcal{E}_m (u|_{V_m}),
\end{equation*}
for any $u\in C(\SG)$, which is extended to a bilinear form $\mathcal{E}(u,v)$ via polarization, see e.g.~\cite[Section 1.4]{Str06}. The ``Neumann domain'' of $\mathcal{E}$, denoted by $\mathcal{F}$, is obtained as the closure of continuous functions with respect to $(\mathcal{E}(u)+\|u\|_{L^2})^{1/2}$. The Dirichlet domain $\mathcal{F}_0$, consists of those functions in $\mathcal{F}$ that are zero on the boundary $V_0$. For the ease of the presentation, we will consider here the Neumann case only, and note that the same result can be obtained in the Dirichlet case.

\medskip

The bilinear form $(\mathcal{E},\mathcal{F})$ is in fact a local and regular Dirichlet form~\cite[Theorem 3.4.6]{Kig01} and has thus an associated infinitesimal generator $\Delta$, which is a non-positive self-adjoint operator. In the case of $\SG$, it has pure point spectrum and only accumulation point at $-\infty$, see e.g.~\cite[Theorem 4.2]{FS92}, and is regarded as the standard intrinsic Laplace operator on $\SG$ with Neumann boundary conditions. More precisely, we say that $-\Delta u=f$ for $u\in\mathcal{F}$, if and only if
    \begin{equation*}
        \mathcal{E}(u,v)=\langle f,v\rangle\qquad\forall\,v\in\mathcal{F},
    \end{equation*}
where $\langle \cdot,\cdot\rangle$ denotes the standard inner product in $L^2_\mu(\SG)$. 

\medskip

In addition, it is also possible to give a pointwise expression of the Laplacian in terms of graph approximations as the renormalized limit 
\begin{equation*}
    \Delta u=\lim_{m\to\infty}
    5^m\Delta_m (u|_{V_m}),
\end{equation*}
where $\Delta_m$ denotes the graph Laplacian associated with the approximating graph at level $m$
\begin{equation}\label{E:def_graph_Laplacian_m}
    \Delta_mu(p)=\sum_{q\stackrel{m}{\sim}p}\big(u(p)-u(q)\big)
\end{equation}
for any $u\colon V_m\to\mathbb{R}$, see e.g.~\cite[Example 3.7.3]{Kig01} or~\cite[Section 2.2]{Str06}. 
\subsection{Eigenvalues of the Laplacian on SG}\label{SS:Eigenvalues}
An early observation regarding the analytic properties of the Laplacian on $\SG$ was that its spectrum could be described by means of a so-called \emph{spectral decimation method}, introduced in~\cite{RT82} and further studied by Fukushima and Shima in~\cite{FS92}. In short, an eigenvalue $\lambda$ of $-\Delta$ arises as the limit 
\begin{equation}\label{E:ev_as_lim_a}
\lambda=\lim_{m\to\infty}5^m\lambda_m
\end{equation}
for a unique sequence $\{\lambda_m\}_{m\geq j}$, where $\lambda_m$ is an eigenvalue of the graph Laplacian $-\Delta_m$. The sequence starts at a specific level $j\geq 1$ see~\cite[Theorem 5.1]{FS92}, that is referred to as the \emph{generation of birth} of the eigenvalue $\lambda$. 
In Neumann case, such ``first ancestors'' $\lambda_j$ can only take the values $5$ or $6$, and one often calls an eigenvalue $\lambda$ either a $5$-series eigenvalue, or a $6$-series eigenvalue, depending on which was the first element in the sequence. In the Dirichlet case, also the value $2$ is allowed, but this does not affect the construction relevant to the main result here, which will only involve eigenfunctions associated with $6$-series eigenvalues.

\medskip

For that purpose, we will denote by $\Lambda_j$, $j\geq 2$, the \emph{lowest} $6$-series eigenvalue of $-\Delta$ with generation of birth $j$. About these eigenvalues it is known that 
\begin{equation}\label{R:lowest_D}
\Lambda_j=5^{j-2}\Lambda_2\qquad\text{and}\qquad{\rm mult}(\Lambda_j):=\frac{1}{2}(3^{j}-3)=:N_j
\end{equation}
for all $j\geq 2$, c.f.~\cite[Section 5]{FS92} or~\cite[Section 3.3]{Str06}. Notice that $N_j$ is the number of vertices in $V_{j-1}{\setminus}V_0$.

\subsection{Eigenfunctions of the Laplacian on SG}\label{SS:Eigenfunctions}
Given a specific eigenvalue $\lambda$ of $-\Delta$, there is a standard procedure to construct a particular 
eigenfunction associated with it. The method consists in applying the so-called \emph{eigenfunction extension algorithm}~\eqref{E:efct_extension} together with a density argument. We briefly review the method for a $6$-series eigenvalue, since these will be the ones relevant for our purposes, although it works analogously for any other eigenvalue.
\medskip

Let thus $\lambda=\Lambda_j$ be a fixed 6-series eigenvalue with generation of birth $j\geq 2$ 
and describe an eigenfunction $\phi$ associated with this eigenvalue.

\medskip

Since $\lambda$ is a $6$-series, the first element in the approximating sequence is $\lambda_j=6$. The latter is an eigenvalue of the graph Laplacian $-\Delta_j$. With some linear algebra one can construct an associated eigenvector, which we regard as a function $\phi_j\colon V_j\to\mathbb{R}$. As is customary, the construction is done in a way that $\phi_j$ takes the value 2 at one of the vertices in $V_{j}{\setminus}V_0$. A possible choice for $j=2$ is described in Figure~\ref{F:6series_ev_level2}, for more details we refer e.g.~\cite[Section 3.3]{Str06}.
    
\begin{figure}[H]
    \includegraphics[scale=.35]{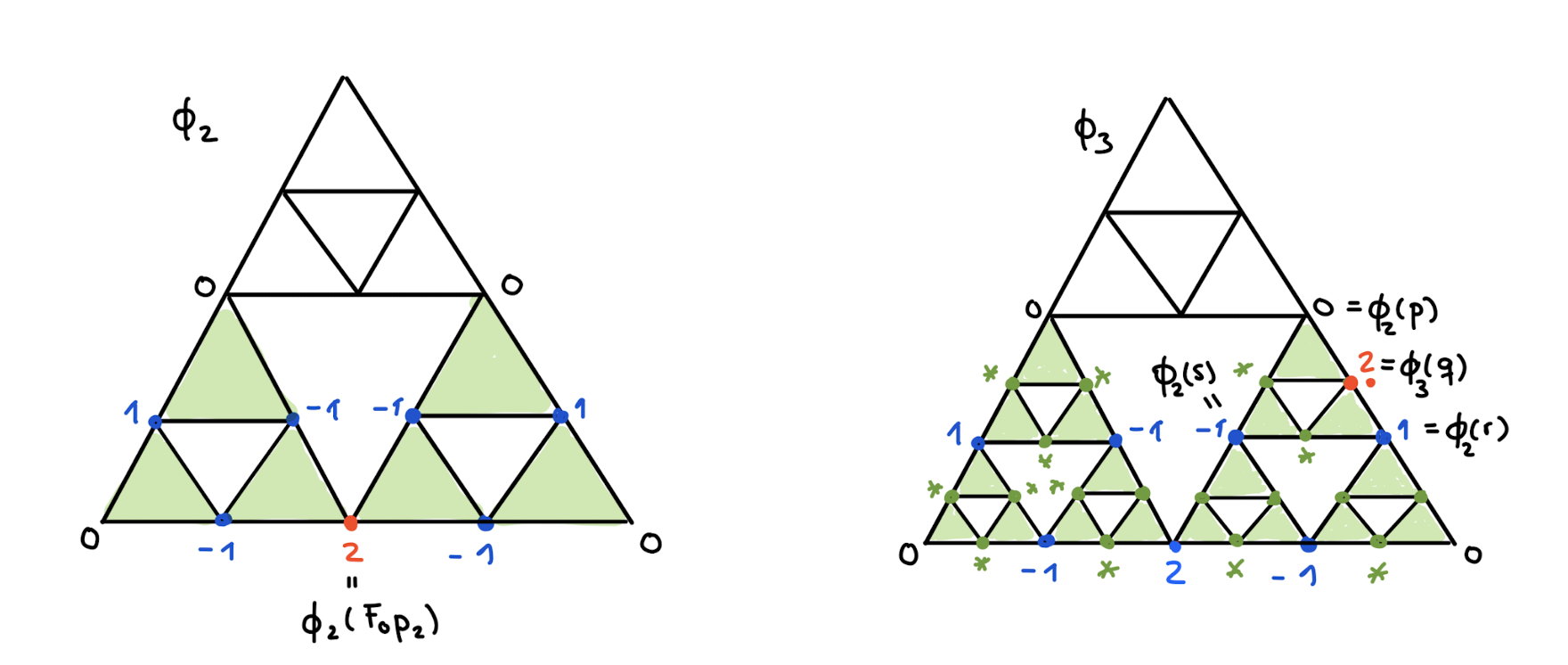}
    \caption{Eigenvector $\phi_2\colon V_2\to\mathbb{R}$ associated with the eigenvalue graph Laplacian $-\Delta_2$ and its extension to (an eigenvector) $\phi_3\colon V_3\to\mathbb{R}$ at level 3.}
    \label{F:6series_ev_level2}
\end{figure}

 
\medskip

Next, one extends $\phi_j$ to a function in the next approximation level, $\phi_{j+1}\colon V_{j+1}\to\mathbb{R}$, which will in fact be an eigenvector of $\lambda_{j+1}$. To obtain the values of the function at the missing vertices, that is, for $q\in V_{j+1}{\setminus}V_j$, one applies the \emph{spectral decimation extension algorithm} 
    \begin{equation}\label{E:efct_extension}
    \phi_{j+1}(q)=\frac{(4-\lambda_j)\big(\phi_j(p)+\phi_j(r)\big)+2\phi_j(s)}{(2-\lambda_j)(5-\lambda_j)},
    \end{equation} 
    where $p,r\in V_j$ are the closest neighbors of $q$ in $V_{j+1}$, and $s\in V_j$ the furthest, c.f.~\cite[(3.2.7)]{Str06}. Figure~\ref{F:6series_ev_level2} shows the extension of $\phi_2\colon V_2\to\mathbb{R}$ to $V_3$.

\medskip

Applying~\eqref{E:efct_extension} subsequently yields a function $\phi_*\colon V_*\to\mathbb{R}$. Using the fact that eigenfunctions are continuous and $V_*$ is dense in $\SG$, one finally obtains an eigenfunction $\phi\colon \SG\to\mathbb{R}$ of the original eigenvalue $\lambda$.


\section{Main result}\label{S:main_result}
We are now in the position to precisely state and prove the main result in the paper, that is the possibility of finding sequences of high energy eigenfunctions whose associated probability distribution measures converge weakly to a Dirac at a point in $V_*{\setminus}V_0$.

\begin{theorem}\label{T:main_result}
    Let $m\geq 1$ and $q\in V_m{\setminus}V_{m-1}$. There exists a sequence $\{\psi_{n}\}_{n\geq m}$ of eigenfunctions with eigenvalues $\{\lambda_n\}_{n\geq m}$ such that 
    \begin{equation*}
        \lambda_n=\Lambda_{n+1}=5^{n-1}\Lambda_2
    \end{equation*}
    for each $n\geq m$, and
    \begin{equation}\label{E:main_result}
         \int_{\SG}\eta|\psi_{n}|^2d\mu\xrightarrow{n\to\infty}\eta(q)
    \end{equation}
    for all $\eta\in C(\SG)$.
\end{theorem}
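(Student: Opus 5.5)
We will use the localized eigenfunctions of the $6$-series. For $\Lambda_{n+1}$ (generation of birth $n+1$), the eigenspace has dimension $N_{n+1}$, the number of vertices of $V_n\setminus V_0$. This is because there is a basis of eigenfunctions indexed by the points $x\in V_n\setminus V_0$, each supported on the union of the two $n$-cells $F_w(\SG)$ and $F_{w'}(\SG)$ meeting at $x$. Concretely, at level $n+1$ the graph eigenvector of $-\Delta_{n+1}$ with eigenvalue $6$ can be chosen to vanish on $V_n$ and outside the two $n$-cells containing $x$. Inside those cells it is the scaled copy of the level-$1$ pattern obtained by pulling back the picture in Figure~\ref{F:6series_ev_level2}, the ``bow-tie'' around $x$. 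We then extend it to all of $\SG$ with \eqref{E:efct_extension}. Support is preserved under extension: if $\phi$ vanishes at all three vertices of an $(n+1)$-cell, the extension vanishes on that whole cell, since the next $\lambda_k$ is not $2,5$. So $\psi$ is supported in those two $n$-cells, and $\psi(x)=0$.

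That construction is centred at $x$, but we need mass concentrating at $q$. The plan is therefore to take $x=x_n$ adjacent to $q$. Better, we can superpose the bow-ties of all points of $V_n$ neighbouring $q$ inside the two $n$-cells that contain $q$ (note $q\in V_m\subseteq V_n$ for $n\ge m$). Any linear combination of such localized eigenfunctions is still an eigenfunction of $\Lambda_{n+1}$, and its support lies in the union of $n$-cells meeting $q$ or its neighbours, i.e.\ within Euclidean distance $C2^{-n}$ of $q$. The simplest option is a single $x_n\in V_n\setminus V_0$ with $|x_n-q|=2^{-n}$, namely a neighbour of $q$ in the level-$n$ graph. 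Then $\operatorname{supp}\psi_n\subseteq B(q,2\cdot 2^{-n})$. Such a point exists and avoids $V_0$ once $n$ is large: take $x_n$ to be the midpoint-type neighbour of $q$ inside a cell at level $n$ not touching the boundary. Since $q\notin V_0$, this works for $n\ge m$, possibly after discarding finitely many terms or choosing the cell carefully.

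Next we normalize $\|\psi_n\|_{L^2_\mu}=1$; this is possible since $\psi_n\not\equiv0$. Then $|\psi_n|^2d\mu$ is a probability measure supported in $B(q,C2^{-n})$. For $\eta\in C(\SG)$ this gives
\[
\Big|\int_{\SG}\eta|\psi_n|^2d\mu-\eta(q)\Big|\le\sup_{|p-q|\le C2^{-n}}|\eta(p)-\eta(q)|\to0
\]
by continuity. This yields \eqref{E:main_result}. The eigenvalue identity $\lambda_n=\Lambda_{n+1}=5^{n-1}\Lambda_2$ follows from \eqref{R:lowest_D}, once we verify that the localized eigenfunction actually belongs to the \emph{lowest} $6$-series eigenvalue born at generation $n+1$. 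This requires choosing the continuation $\lambda_{k+1}$ of $\lambda_k$ in decimation always via the smaller root of $\lambda_k=\lambda_{k+1}(5-\lambda_{k+1})$, and the extension \eqref{E:efct_extension} is insensitive to this choice.

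The main obstacle is the rigorous justification of the localization. We must show that the level-$(n+1)$ vector supported on the two $n$-cells at $x_n$ is a genuine eigenvector of $-\Delta_{n+1}$ with eigenvalue $6$, checking the equation at $x_n$ and at the other vertices of $V_n$, where it vanishes. This is a direct local computation using the explicit values in Figure~\ref{F:6series_ev_level2}, which by self-similarity (the maps $F_w$) reduces to the level-$2$ picture. We must also check that the Neumann condition at $V_0$ is harmless, which holds since the support avoids $V_0$. If a single bow-tie around a neighbour proves awkward near cell junctions, the fallback is to use the bow-tie centred at $q$ itself on the refined scale $n$. This works because $q\in V_n\setminus V_0$: its support, the two $n$-cells at $q$, shrinks to $q$ equally well, and this is probably the cleanest choice.
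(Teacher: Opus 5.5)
Your final choice, the $\Lambda_{n+1}$-eigenfunction generated by the level-$(n+1)$ bow-tie centred at $q$ itself, is exactly the paper's $\varphi_n$. This choice is legitimate for every $n\ge m$ because $q\in V_n\setminus V_0$. So the construction and the eigenvalue bookkeeping (smallest root at each step) agree with the paper; the difference is in how you take the limit.

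The paper uses \eqref{E:efct_relation} and Proposition~\ref{P:family_properties} to write $|\psi_n|^2d\mu$ as the push-forward of the fixed measure $|\psi_m|^2d\mu$ under $F_{wk\ell^{n-m}}\circ F_{wk}^{-1}$ and $F_{w\ell k^{n-m}}\circ F_{w\ell}^{-1}$, and then applies dominated convergence. You use only that $|\psi_n|^2d\mu$ is a probability measure carried by the two $n$-cells at $q$, which lie within distance $2^{-n}$ of $q$. You then bound the error by $\sup_{|p-q|\le 2^{-n}}|\eta(p)-\eta(q)|$. Your route is shorter and needs neither \eqref{E:efct_relation} nor the norm identity. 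It also shows that any normalized $\Lambda_{n+1}$-eigenfunction supported in $n$-cells near $q$ would work, including neighbour bow-ties and superpositions. The paper's route gives more structure: the measures are exact rescaled copies of one profile, and $\|\varphi_n\|_{L^2_\mu}=3^{(m-n)/2}\|\varphi_m\|_{L^2_\mu}$.

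Several side claims are wrong, although your final argument does not rely on them.

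\textbf{Vanishing on $V_n$.} The level-$(n+1)$ eigenvector for eigenvalue $6$ cannot vanish on all of $V_n$. On an $n$-cell with zero vertex values, the equations at the three midpoints say the midpoint values form an eigenvector of $4I-A$, where $A$ is the adjacency matrix of a triangle. The eigenvalues of $4I-A$ are $2,5,5$, so for eigenvalue $6$ the midpoint values vanish and the whole vector is $0$. The correct bow-tie at $x$ vanishes on $V_n\setminus\{x\}$ and equals $2$ at $x$. It takes the value $-1$ at the four midpoints adjacent to $x$ and $1$ at the midpoint of the side opposite $x$ in each of the two cells. So $\psi(x)=2\neq 0$, matching the paper's $\varphi_n(q)=2$, and there was never a reason to move away from $q$.

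\textbf{Dependence on the root.} The extension \eqref{E:efct_extension} is insensitive to the choice of root only in the sense that zero values at the three vertices of a cell force zero on the whole cell. The extended values and the limiting eigenvalue do depend on the choice. It is taking the smaller root at every step after the forced $6\mapsto 3$ that produces $\Lambda_{n+1}$.

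\textbf{Support and $V_0$.} For $n=m$ the support need not avoid $V_0$. For example, with $q=F_0(p_1)$ and $n=1$, the cell $F_0(\SG)$ contains $p_0$. The Neumann equation at such a vertex still holds, because the vector is $0$ there and its two neighbours carry $-1$ and $1$.
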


\begin{remark}\label{R:countably_many}
    Note that there are in fact countable infinitely many sequences satisfying~\eqref{E:main_result}, and the associated eigenvalues satisfy $\lambda_n\xrightarrow{n\to\infty}\infty$.
\end{remark}

It would be interesting to decide what other measures may occur as limits of sequences of high energy eigenfunctions of the Laplacian on $\SG$, and the question will be left to future investigation.  

\medskip

The key to prove Theorem~\ref{T:main_result} is to find suitable sequences of $6$-series eigenfunctions as constructed in Section~\ref{SS:Eigenfunctions}.

\subsection{Localized profile}\label{SS:loc_profile}
To begin with, we describe what could be considered as the \emph{profile} that will serve as guide to construct the sequence of eigenfunctions associated with a given $q\in V_m{\setminus}V_{m-1}$. Throughout this section, the index $m\geq 1$ is arbitrary but fixed. 

\medskip

Recall the mappings $F_k$, $k=0,1,2$ from~\eqref{E:def_Fi} that define $\SG$ and note that the point $q$ lies at the intersection of two triangular cells, that is
    \begin{equation}\label{E:q_as_Fwjpi}
        q=F_{wk}(p_\ell)=F_{wk}(\SG)\cap F_{w\ell}(\SG)
    \end{equation}
for a unique $w\in \{0,1,2\}^{m-1}$ and $k,\ell\in\{0,1,2\}$ with $k<\ell$. Figure~\ref{F:pre-profile} illustrates an example with $m=1$, where $q=F_0(p_2)$ and $w=\text{\o}$ is the empty word (of zero length).

\begin{figure}[H]
    \includegraphics[scale=.35]{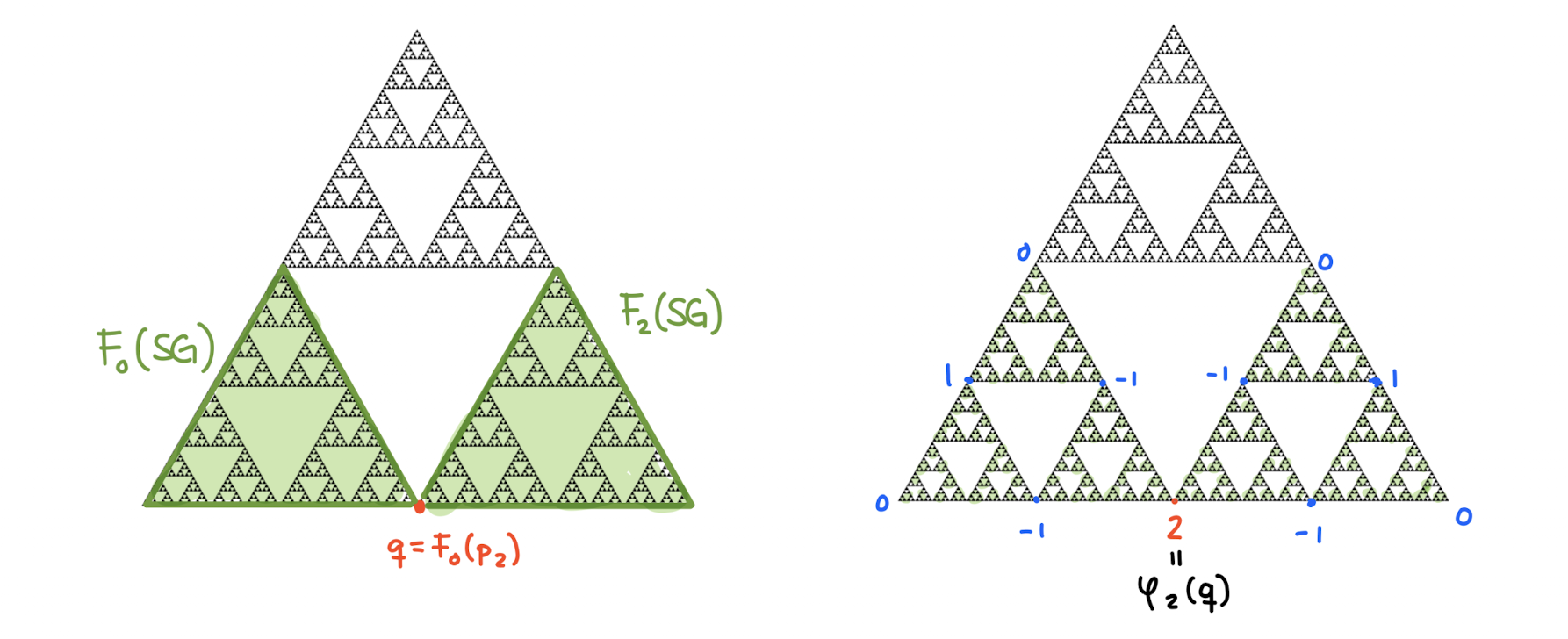}
    \caption{The ``profile'' function $\varphi_{2}$ with $q=F_0(p_2)$, $\varphi_{2}(q)=2$, and support in $F_0(\SG)\cup F_2(\SG)$.}
    \label{F:pre-profile}
\end{figure}

As profile, we will consider the $6$-series eigenfunction $\varphi_{m}\in C(\SG)$ associated with the $6$-series eigenvalue $\Lambda_{m+1}$, constructed through the procedure described in Section~\ref{SS:Eigenfunctions}. In particular, this function will satisfy
    \begin{equation}\label{E:first_efct_2_at_q}
        \varphi_{m}(q)=2,
    \end{equation}
and
    \begin{equation}\label{E:support_profile}
        {\rm supp}\,\varphi_{m}\subset F_{wk}(\SG)\cup F_{w\ell}(\SG),
    \end{equation}
see Figure~\ref{F:pre-profile}. The latter implies that this function is localized in that it is supported only on the two cells intersecting at $q$.

\subsection{Eigenfunction sequence}\label{SS:pre_sequence}
Again in this section we continue with a given $q\in V_m{\setminus}V_{m-1}$ for a fixed $m\geq 1$, expressible as the intersection~\eqref{E:q_as_Fwjpi} for a unique $w\in \{0,1,2\}^{m-1}$ and $k,\ell\in\{0,1,2\}$ with $k<\ell$. 
Using the fact that each of the mappings $F_k$ from~\eqref{E:def_Fi} has as fixed point $p_k$, for each $n\geq m$ we may write
    \begin{equation*}
    q=F_{wk}(p_\ell)=F_{wk\ell^{n-m}}(p_\ell)
     =F_{wk\ell^{n-m}}(\SG)\cap F_{w\ell k^{n-m}}(\SG).
    \end{equation*}
Again, the procedure described in Section~\ref{SS:Eigenfunctions} now allows to construct a $6$-series eigenfunction $\varphi_{n}\in C(\SG)$ associated with the $6$-series eigenvalue $\Lambda_{n+1}$, which in particular satisfies
    \begin{equation}\label{E:efcts_2_at_q}
            \varphi_{n}(q)=2
    \end{equation}
as well as the localization property
    \begin{equation}\label{E:support_efcts}
        {\rm supp}\,\varphi_{n}\subset F_{wk\ell^{n-m}}(\SG)\cup F_{w\ell k^{n-m}}(\SG).
    \end{equation}
Moreover, the symmetry of the spectral decimation algorithm~\eqref{E:efct_extension} implies that
\begin{equation}\label{E:efct_relation}
    \varphi_{n+1}|_{F_{wk\ell^{n-m+1}}(\SG)}\equiv\varphi_{n}|_{F_{wk\ell^{n-m}}(\SG)}
\end{equation}
and the same interchanging $k$ and $\ell$. As a consequence,
\begin{equation}\label{E:nested_supps}
    {\rm supp}\,\varphi_{n+1} 
    \subset{\rm supp}\,\varphi_{n}. 
\end{equation}
Figure~\ref{F:pre-eigenvectors} exemplifies $\varphi_{2}$ and $\varphi_{3}$ when $m=2$ and $q=F_0(p_2)$.

\begin{figure}[H]
    \includegraphics[scale=.35]{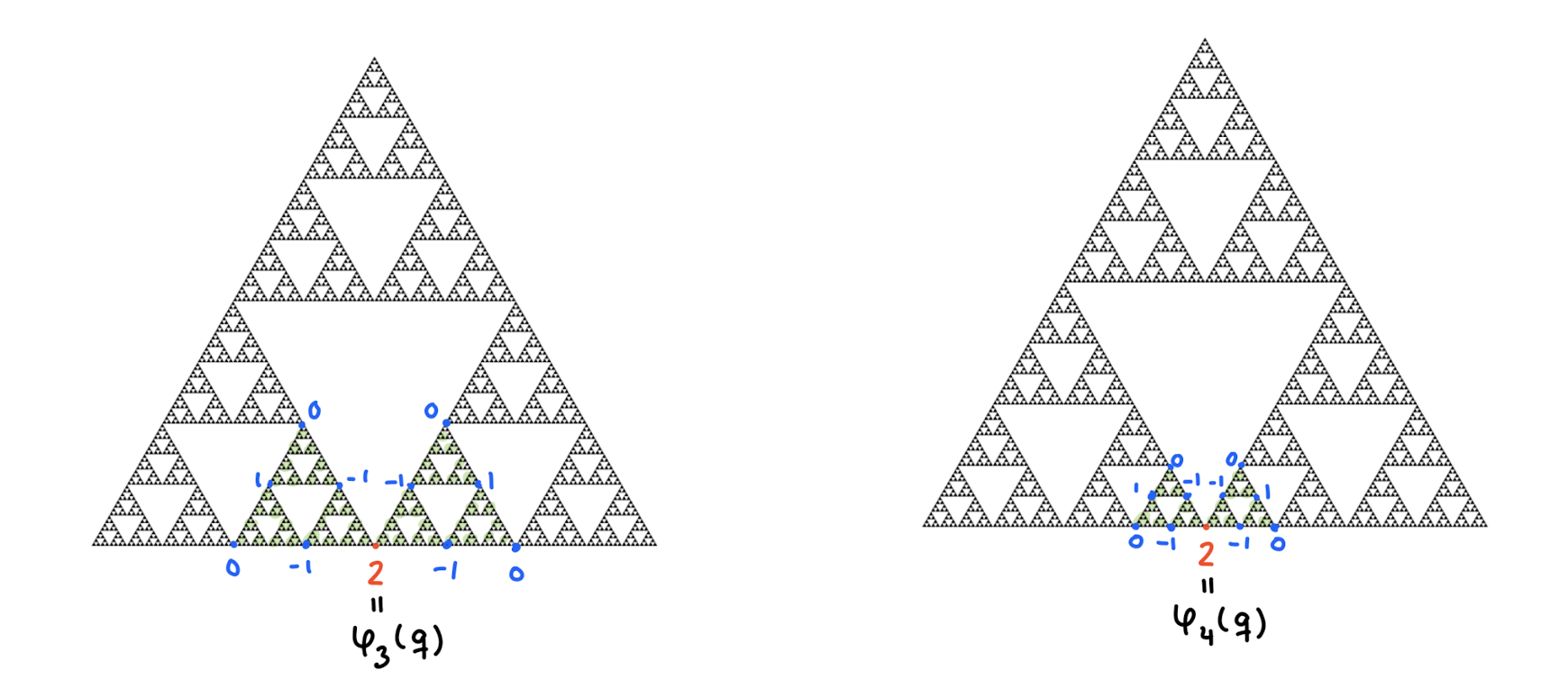}
    \caption{Eigenfunctions satisfying $\varphi_{q,n}(q)=2$ with $q=F_0(p_2)$ and their supports (colored in green).}
    \label{F:pre-eigenvectors}
\end{figure}

The relation~\eqref{E:efct_relation} provides a link between each $\varphi_{n}$, $n\geq m$, and the profile $\varphi_{m}$ which we formalize next since it will be of relevance in the proof of the main result.

\begin{proposition}\label{P:family_properties}
Let $q=F_{wk}(p_\ell)$ for some $w\in \{0,1,2\}^{m-1}$ and $k,\ell\in\{0,1,2\}$ with $k<\ell$. For each $n\geq m$, the $6$-series eigenfunction $\varphi_{n}$ satisfies
\begin{enumerate}[wide=0em,label={\rm(\roman*)},itemsep=.25em]
    \item 
    \begin{equation}\label{E:efct_m_vs_1}
        \varphi_{n}(p)=
        \begin{cases}
            \varphi_{m}(F_{wk}(\tilde{p}))&\text{if }p=F_{wk\ell^{n-m}}(\tilde{p}),\;\tilde{p}\in\SG,\\
            \varphi_{m}(F_{wj}(\tilde{p}))&\text{if }p=F_{w\ell k^{n-m}}(\tilde{p}),\;\tilde{p}\in\SG,\\
            0&\text{else},
        \end{cases}
        \qquad \forall\;p\in\SG,
    \end{equation}
    \item
    \begin{equation}\label{E:efct_m_norm}
        \|\varphi_{n}\|_{L^2_\mu(\SG)}=3^{\frac{m-n}{2}}\|\varphi_{m}\|_{L^2_\mu(\SG)}.
    \end{equation}
\end{enumerate}
\end{proposition}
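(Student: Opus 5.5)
Part (i) will be proved by induction on $n\ge m$, using the restriction identity~\eqref{E:efct_relation} (and its analogue with $k$ and $\ell$ interchanged) together with the support property~\eqref{E:support_efcts}. Note that the second case of~\eqref{E:efct_m_vs_1} should read $\varphi_m(F_{w\ell}(\tilde p))$; the subscript $j$ is evidently a typo for $\ell$.

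\emph{Base case $n=m$.} The words $wk\ell^{0}=wk$ and $w\ell k^{0}=w\ell$ reduce the first two cases to tautologies. The third case is exactly~\eqref{E:support_profile}. On the overlap $F_{wk}(\SG)\cap F_{w\ell}(\SG)=\{q\}$ both formulas give $\varphi_m(q)$, so the piecewise definition is consistent.

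\emph{Inductive step.} Assume (i) holds for $n$. If $p=F_{wk\ell^{n+1-m}}(\tilde p)$, then~\eqref{E:efct_relation}, read as the identity of functions
\begin{equation*}
\varphi_{n+1}\circ F_{wk\ell^{n+1-m}}=\varphi_n\circ F_{wk\ell^{n-m}}
\end{equation*}
on $\SG$, gives
\begin{equation*}
\varphi_{n+1}(p)=\varphi_n\big(F_{wk\ell^{n-m}}(\tilde p)\big)=\varphi_m\big(F_{wk}(\tilde p)\big),
\end{equation*}
where the last equality is the induction hypothesis. The $F_{w\ell k^{n+1-m}}$ case is identical after interchanging $k$ and $\ell$. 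Outside these two cells, $\varphi_{n+1}=0$ by~\eqref{E:support_efcts}.

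The main point to justify is this reading of~\eqref{E:efct_relation} as a composition with the cell maps rather than a literal equality of restrictions. This is what "symmetry of the spectral decimation algorithm" means. Both $\varphi_n$ and $\varphi_{n+1}$ are built from level-$(n+1)$, respectively level-$(n+2)$, eigenvectors of the value $6$. These have the same local pattern, scaled by one level, on the two cells meeting at $q$. The extension rule~\eqref{E:efct_extension} is local and depends only on the values at the three corners of a cell and on $\lambda_j$. Moreover, the sequences $\lambda_j=6,\lambda_{j+1},\dots$ coincide after reindexing for $\Lambda_{n+1}$ and $\Lambda_{n+2}$, because both are the lowest $6$-series eigenvalues (same branch choice, compare~\eqref{R:lowest_D}). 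Hence the pulled-back functions agree on $V_*$, and by continuity they agree on $\SG$.

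Part (ii) will follow from (i) and the scaling~\eqref{E:measure_scaling}. By (i), $\varphi_n$ is supported on the two cells $F_{wk\ell^{n-m}}(\SG)$ and $F_{w\ell k^{n-m}}(\SG)$, of word length $n+1$, which overlap only at the $\mu$-null point $q$. By the self-similarity of $\mu$, $\int_{F_v(\SG)} f\,d\mu=3^{-|v|}\int_{\SG}f\circ F_v\,d\mu$, so
\begin{equation*}
\int_{F_{wk\ell^{n-m}}(\SG)}|\varphi_n|^2d\mu=3^{-(n+1)}\int_{\SG}|\varphi_m\circ F_{wk}|^2d\mu=3^{m-n}\int_{F_{wk}(\SG)}|\varphi_m|^2d\mu.
\end{equation*}
The same holds for the $\ell$-cell. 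Summing the two contributions and using~\eqref{E:support_profile} gives $\|\varphi_n\|^2=3^{m-n}\|\varphi_m\|^2$, and taking square roots yields~\eqref{E:efct_m_norm}.
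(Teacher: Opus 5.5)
Your proposal is correct and follows essentially the same route as the paper. For (i) you iterate \eqref{E:efct_relation}, read as $\varphi_{n+1}\circ F_{wk\ell^{n-m+1}}=\varphi_n\circ F_{wk\ell^{n-m}}$, and for (ii) you change variables using the self-similar scaling \eqref{E:measure_scaling}.

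There is one harmless indexing slip. Since $|w|=m-1$, the cells $F_{wk\ell^{n-m}}(\SG)$ have word length $n$, not $n+1$, so the middle term of your display should be $3^{-n}\int_{\SG}|\varphi_m\circ F_{wk}|^2d\mu$; as written it differs from the right-hand side by a factor of $3$. The final identity $\|\varphi_n\|^2=3^{m-n}\|\varphi_m\|^2$ is unaffected, since only the level difference $n-m$ matters.
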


\begin{proof}
    \begin{enumerate}[wide=0em,label={\rm(\roman*)},itemsep=.25em]
    \item 
    Assume for instance that $p\in F_{wk\ell^{m-n}}(\tilde{p})$ for some $\tilde{p}\in\SG$. Then, iterating~\eqref{E:efct_relation} we see that
    \begin{equation*}
       \varphi_{n}(p)=\varphi_{n}(F_{wk\ell^{n-m}}(\tilde{p}))= \varphi_{n}(F_{wk\ell^{n-m-1}}(\tilde{p}))=\cdots=\varphi_{m}(F_{wk}(\tilde{p})),
    \end{equation*}
    and analogously interchanging $k$ and $\ell$. This together with~\eqref{E:support_efcts} and~\eqref{E:nested_supps} yield~\eqref{E:efct_m_vs_1}.
    \item In view of the localization property~\eqref{E:support_efcts}, the relation~\eqref{E:efct_m_vs_1} and the scaling of the measure $\mu$ from~\eqref{E:measure_scaling}, 
    \begin{equation*}
        \begin{aligned}
            \|\varphi_{n}\|_{L^2_\mu(\SG)}^2
            &=\int_{\SG} |\varphi_{n}|^2d\mu 
            \stackrel{\eqref{E:support_efcts}}{=}\int_{F_{wk\ell^{n-m}}(\SG)}|\varphi_{n}|^2d\mu+\int_{F_{w\ell k^{n-m}}(\SG)}|\varphi_{n}|^2d\mu\\
            &=\int_{\SG}|\varphi_{n}{\circ}F_{wk\ell^{n-m}}|^2d(\mu{\circ}F_{wk\ell^{n-m}})+\int_{\SG}|\varphi_{n}{\circ}F_{wk\ell^{n-m}}|^2d(\mu{\circ}F_{w\ell k^{n-m}})\\
            &\stackrel{\eqref{E:efct_m_vs_1}}{=}\int_{\SG}|\varphi_{m}{\circ}F_{wk}|^2d(\mu{\circ}F_{wk\ell^{n-m}})+\int_{\SG}|\varphi_{m}{\circ}F_{w\ell}|^2d(\mu{\circ}F_{w\ell k^{n-m}})\\
            &\stackrel{\eqref{E:measure_scaling}}{=}3^{m-n}\int_{\SG}|\varphi_{m}{\circ}F_{wk}|^2d(\mu{\circ}F_{wk})+3^{m-n}\int_{\SG}|\varphi_{m}{\circ}F_{w\ell}|^2d(\mu{\circ}F_{w\ell})\\
            &=3^{m-n}\int_{F_{wk}(\SG)}|\varphi_{m}|^2d\mu+3^{m-n}\int_{F_{w\ell}(\SG)}|\varphi_{m}|^2d\mu\\
            &=3^{m-n}\int_{\SG} |\varphi_{m}|^2d\mu.
        \end{aligned}
    \end{equation*}
    \end{enumerate}
\end{proof}


\subsection{Proof of the main result}
\begin{proof}[Proof of~\Cref{T:main_result}]
    Let $q\in V_m{\setminus}V_{m-1}$ with fixed $m\geq 1$, expressed as $F_{wk}(p_\ell)$ for some $w\in \{0,1,2\}^{m-1}$ and $k,\ell\in\{0,1,2\}$ with $k<\ell$. Consider the sequence of normalized eigenfunctions $\{\psi_{n}\}_{n\geq m}$ given by
    \begin{equation}\label{E:def_psi_n}
        \psi_{n}:=\frac{\varphi_{n}}{\|\varphi_{n}\|_{L^2_\mu(\SG)}},
    \end{equation}
    where $\varphi_{n}$ denote the eigenfunctions constructed in Section~\ref{SS:pre_sequence}.
    Let now $\eta\in C(K)$. 
    By virtue of Proposition~\ref{P:family_properties}, for any $n\geq m$ it holds that
    \begin{align}\label{E:main_h_01}
        \int_{\SG}\eta|\psi_{n}|^2\,d\mu
        &\stackrel{\eqref{E:support_efcts}}{=}\int_{F_{wk\ell^{n-m}}(\SG)}\eta|\psi_{n}|^2d\mu+\int_{F_{w\ell k^{n-m}}(\SG)}\eta|\psi_{n}|^2d\mu\notag\\
        &\stackrel{\eqref{E:def_psi_n}}{=}\frac{3^{n-m}}{\|\varphi_{m}\|_{L^2_\mu(\SG)}^2}\int_{\SG}\eta{\circ}F_{wk\ell^{n-m}}|\varphi_{m}{\circ}F_{wk}|^2d(\mu{\circ}F_{wk\ell^{n-m}})\notag\\
        &+\frac{3^{n-m}}{\|\varphi_{m}\|_{L^2_\mu(\SG)}^2}\int_{\SG}\eta{\circ}F_{w\ell k^{n-m}}|\varphi_{m}{\circ}F_{w\ell}|^2d(\mu{\circ}F_{w\ell k^{n-m}})\notag\\
        &\stackrel{\eqref{E:measure_scaling}}{=}\frac{1}{\|\varphi_{m}\|_{L^2_\mu(\SG)}^2}\int_{\SG}\eta{\circ}F_{wk\ell^{n-m}}|\varphi_{m}{\circ}F_{wk}|^2d(\mu{\circ}F_{wk})\notag\\
        &+\frac{1}{\|\varphi_{m}\|_{L^2_\mu(\SG)}^2}\int_{\SG}\eta{\circ}F_{w\ell k^{n-m}}|\varphi_{m}{\circ}F_{w\ell}|^2d(\mu{\circ}F_{w\ell})\notag\\
        &=\int_{F_{wk}(\SG)}(\eta{\circ}F_{wk\ell^{n-m}}{\circ}F_{wk}^{-1})|\psi_{m}|^2d\mu\notag\\
        &+\int_{F_{wj}(\SG)}(\eta{\circ}F_{w\ell k^{n-1}}{\circ}F_{w\ell}^{-1})|\psi_{m}|^2d\mu.
    \end{align}
    Note now that $wk\in\{0,1,2\}^m$, whence by definition $F_{wk}$ is a contraction of ratio $3^{-{m}}$, and in addition $F_\ell$ has as fixed point $p_\ell$. Thus, for any $\tilde{p}\in\SG$,
    \begin{equation*}
        \begin{aligned}
        \big\|F_{wk\ell^{n-m}}(\tilde{p})-q\big\|
        &=\big\|F_{wk\ell^{n-m}}(\tilde{p})-F_{wk}(p_\ell)\big\|\\
        &\leq 3^{-{m}}\big\|F_{\ell^{n-m}}(\tilde{p})-p_\ell\big\|\leq \big\|F_{\ell^{n-m}}(\tilde{p})-p_\ell\big\| 
        \xrightarrow{n\to\infty}0.
        \end{aligned}
    \end{equation*}
    Since $\eta$ is continuous and $\SG$ compact, applying dominated convergence to~\eqref{E:main_h_01} and taking on account~\eqref{E:support_efcts} finally yields
    \begin{equation*}
        \begin{aligned}
            \lim_{n\to\infty}\int_{\SG}\eta|\psi_{n}|^2d\mu
            &=\int_{F_{wk}(\SG)}\lim_{n\to\infty}\eta\big(F_{wk\ell^{n-m}}(F_{wk}^{-1}(\tilde{p})\big)|\psi_{m}(\tilde{p})|^2d\mu(\tilde{p})\\
            &+\int_{F_{w\ell}(\SG)}\lim_{n\to\infty}\eta\big(F_{w\ell k^{n-1}}(F_{w\ell}^{-1}(\tilde{p})\big)|\psi_{m}(\tilde{p})|^2d\mu(\tilde{p})\\
            &=\eta(q)\int_{F_{wk}(\SG)\cup F_{w\ell}(\SG)}|\psi_{m}|^2d\mu=\eta(q)\|\psi_{m}\|_{L^2(\SG)}^2
            =\eta(q)
        \end{aligned}
    \end{equation*}
    as we wanted to prove.
\end{proof}

\vspace*{3em}

	\subsection*{Acknowledgments}
	This work is partially supported by the ANR project Smooth ANR-22CE40-0017.  The second author is grateful to Nicolas Burq, Patrick G\' erard and Chenmin Sun for helpful discussions on quantum limits.

\bibliographystyle{amsplain}
\bibliography{QE_SG_refs}
\end{document}